\newcommand{\mR}{{\mathbb R}}
\newcommand{\Hb}{\mathbf{H}}
\newcommand{\rank}{{\rm rank}}
\title{\LARGE \bf
Spectral Rank, Feedback, Causality and the\\ Indirect Method for CARMA Identification
}
\author{Wenqi Cao$^{1}$, Anders Lindquist$^{2}$ and Giorgio Picci$^{3}$
\thanks{$^{1}$Department of Automation, Shanghai Jiao Tong University, Shanghai, China. {\tt\small wenqicao@sjtu.edu.cn}}%
\thanks{$^{2}$Department of Automation and School of Mathematical Sciences, Shanghai
Jiao Tong University, Shanghai, China. {\tt\small alq@math.kth.se}}%
\thanks{$^{3}$Department of Information Engineering, University of Padova, Italy. {\tt\small picci@dei.unipd.it}%
}
}
\begin{document}

\maketitle
\thispagestyle{empty}
\pagestyle{empty}

\newtheorem{definition}{\bf Definition}
\newtheorem{lemma}{\bf Lemma}
\newtheorem{theorem}{\bf Theorem}
\newtheorem{example}{\bf Example}
\newtheorem{corollary}{\bf Corollary}
\newtheorem{remark}{\bf Remark}
\newtheorem{proposition}{\bf Proposition}

\begin{abstract}
Building on a recent paper by Georgiou and Lindquist [1] on the problem of rank deficiency of spectral densities and hidden dynamical relations after sampling of continuous-time stochastic processes, this paper is devoted to understanding related questions of feedback and Granger causality that affect stability properties. This then naturally  connects to CARMA identification, where we remark on certain oversights in the literature.
\end{abstract}


\section{Introduction}\label{sec:intro}

Consider a continuous-time stationary $(p+q)$-dimensional vector process $\{\zeta(t); t\in(-\infty,\infty)\}$ with a strictly proper rational spectral density, where each component $\zeta_k$, $k=1,2,\dots,p+q$, corresponds to a node in a dynamical network.  Georgiou and Lindquist considered in \cite{GLsampling} the problem of finding deterministic dynamical relations between subvectors of components of $\zeta$ from sampled data. More precisely, possibly after rearranging the order of the components of $\zeta$, let
\begin{equation}\label{eq:z}
\zeta(t):= \begin{bmatrix}y(t)\\u(t)\end{bmatrix},
\end{equation}
where $y=(\zeta_1,\dots,\zeta_p)'$ and $u:=(\zeta_{p+1},\dots,\zeta_{p+q})'$ are stochastic vector processes of dimensions $p$ and $q$, respectively, and prime denotes transposition.  The question addressed in \cite{GLsampling} is then whether there exists a proper rational transfer function $F(s)$ as in Figure~\ref{system} mapping the signal $u$ to $y$ and whether this  transfer function can be recovered from sampled data.
\begin{figure}[h]
      \centering
      \includegraphics[scale=0.45]{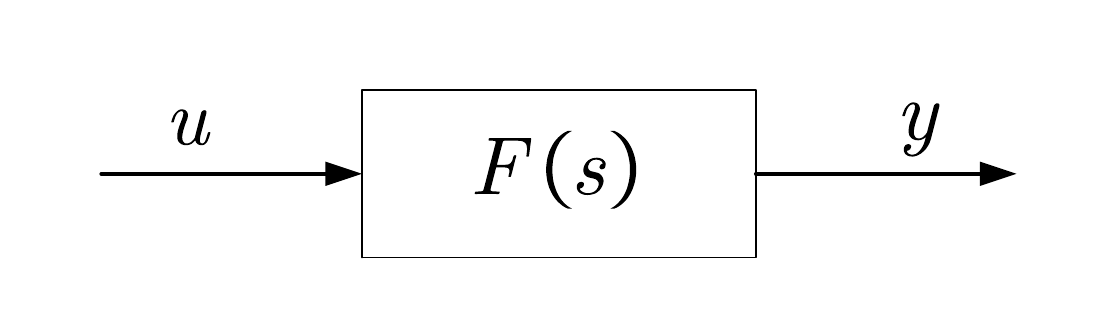}
      \caption{Dynamical relation from $u$ to $y$}
      \label{system}
\end{figure}
In general, such an $F(s)$ would not be stable since the complete system may be stabilized via feedback in the network, but conditions for stability was left as an open question in \cite{GLsampling}.

As in \cite{GLsampling}, we assume that the stationary process $\zeta$ has a minimal Markovian representation  \cite{LPbook}
\begin{subequations}\label{eq:continuous}
\begin{align}
\label{eq:continuous_model}
dx(t)&=Ax(t)dt + Bdw(t)\\
\label{eq:dx}
\zeta(t)&=Cx(t)
\end{align}
\end{subequations}
with $C\in \mR^{(p+q)\times n}$, $A\in \mR^{n\times n}$, $B\in \mR^{n\times m}$ and $w(t)$ a standard $m$-dimensional vector Wiener process. Moreover,  $A$ is a stability matrix, i.e., having all eigenvalues in the open left half plane, and $(C,A)$ and $(A,B)$ are  observable and reachable pairs, respectively. We also assume that $B$ has full column rank and that $\rank(CB)=m$. Then $\zeta$ has  a $(p+q)\times (p+q)$ rational spectral density
\begin{subequations}\label{Phizeta}
\begin{equation}\label{eq:factorization}
\Phi_\zeta(i\omega)=W(i\omega)W(-i\omega)^\prime ,
\end{equation}
where we take $m$ to be the rank of $\Phi_\zeta$ and
\begin{equation}
\label{V}
W(s)=C(sI-A)^{-1}B ,
\end{equation}
\end{subequations}
i.e., $\zeta$ has the spectral representation
\begin{subequations}\label{sprepr}
\begin{equation}
\label{zetaspecrepr}
\zeta(t)=\int_{-\infty}^\infty e^{i\omega t}d\hat{\zeta}=\int_{-\infty}^\infty e^{i\omega t}W(i\omega)d\hat{w},
\end{equation}
where $d\hat{\zeta}$ is an orthogonal stochastic measure such that $\mathbb{E}\{d\hat{\zeta}d\hat{\zeta}^*\}=\Phi_{\zeta}d\omega$ (e.g., \cite[p. 77]{LPbook}) and thus
\begin{equation}
\label{dwhat}
\mathbb{E}\{d\hat{w}d\hat{w}^*\}=\frac{d\omega}{2\pi} I.
\end{equation}
\end{subequations}

To better understand the relation between the processes $u$ and $y$ and the transfer function $F(s)$, in this paper we study the feedback structure provided by the overall network and its relation to spectral rank in a wider context of feedback models.  For example, when is the transfer function stable and when does the network provide feedback? This leads to the study of Granger causality \cite{Granger63,Granger69}, which is increasingly applied in system science and machine learning \cite{Gong15,JPC19,BarnettSeth15,BS17}. A stronger type of causality is when $F(s)$ is actually stable and no feedback is needed. In \cite{Caines} and \cite{LPbook} causality is discussed in the context of linear stochastic system.

This paper is devoted to continuous-time processes, but similar result can be derived for discrete-time models \cite{WenqiThesis}; also see \cite{GLsampling}. Processes with singular spectral densities play an important role in dynamic factor models where there are a latent processes with a singular spectral densities.  In econometrics singular AR and ARMA systems are important in the context of both dynamic factor models and DSGE  (dynamic stochastic general equilibrium) models \cite{EJC,Deistler19}.

One main issue in \cite{GLsampling} was how to restore the continuous-time model \eqref{eq:continuous} and its possible rank deficiency from sampled data. In hindsight it is realized that the identification part of this question had been studied before in the context of the so called {\em Indirect method of CARMA identification\/} \cite{Soderstrom91,MahataFu07,BPS13}, where CARMA stands for Continuous-time Autoregressive  Moving Average. However, starting from a completely new perspective, \cite{GLsampling} provides insights that were overlooked in the literature on CARMA identification. It turns out that rank deficiencies that are hidden in the sampled version of the system are crucial in formulating the identification problem correctly.

The structure of the paper is as follows. In Section~\ref{sec:feedback} we formulate a general feedback model for a pair of jointly correlated stationary processes and investigate related questions of spectral rank. This is then specialized in Section~\ref{sec:determ} to the situation illustrated by Figure~\ref{system}. Granger causality and its connection to feedback is introduced in Section~\ref{sec:Granger}. Section~\ref{sec:examples} provides some illustrative examples. In Section~\ref{sec:CARMA} we remark on the question of rank deficiencies that are hidden in the sampled system. Finally, in Section~\ref{sec:conclusions} we provide some conclusions.

\section{Feedback models and spectral rank}\label{sec:feedback}

The stochastic processes $y$ and $u$ introduced in Section~\ref{sec:intro}  are jointly stationary, and the rational spectral density \eqref{eq:factorization} has the natural partitioning
\begin{equation}
\label{Phidecomp}
\Phi_\zeta(s)=\begin{bmatrix}
\Phi_y(s)&\Phi_{yu}(s)\\ \Phi_{uy}(s)&\Phi_u(s)
\end{bmatrix}.
\end{equation}
Let $\Hb_t^-(u)$ be the closed span of the past components $\{ u_1(\tau), u_2(\tau), \dots, u_q(\tau)\}\mid \tau \leq t\}$ of the vector process $u$ in the Hilbert space of random variables, and let $\Hb_t^-(y)$ be defined likewise in terms of $\{ y_1(\tau), y_2(\tau), \dots, y_p(\tau)\mid \tau \leq t\}$. For future use, we shall also need the closed span $\Hb_t^+(u)$ of the future components $\{ u_1(\tau), u_2(\tau), \dots, u_q(\tau)\}\mid \tau \geq t\}$  and the closed span $\Hb(u)$ of the complete (past and future) history of $u$, and similarly for $y$.

We can now express both $y(t)$ and $u(t)$ as a sum of the best linear estimate based on the past of the other process plus an error term, i.e.,
 \begin{subequations}\label{yu}
           \begin{align}
    y(t) &= \mathbb{E}\{y(t)\mid \Hb_t^-(u)\} + v(t),\label{u2y}\\
    u(t) &= \mathbb{E}\{u(t)\mid \Hb^-_{t}(y)\} + r(t).\label{y2u}
\end{align}
\end{subequations}
The two error processes  $v(t)$ and $r(t)$ are jointly stationary with spectral representations
\begin{equation}
\label{vr}
v(t)=\int_{-\infty}^\infty e^{i\omega t}d\hat{v}, \quad r(t)=\int_{-\infty}^\infty e^{i\omega t}d\hat{r} .
\end{equation}
Each of the linear estimators in \eqref{yu}  can be represented by a linear filter which we symbolically write
\begin{subequations}\label{FuHy}
           \begin{align}
    y &= F(s)u + v,\\
    u & = H(s)y + r ,
 \end{align}
  \end{subequations}
 where $F(s)$ and $H(s)$ are proper rational transfer functions of dimensions $p\times q$ and $q\times p$, respectively. Hence we have the feedback configuration depicted in Fig.~\ref{figurelabel}.
 \begin{figure}
      \centering
      \includegraphics[scale=0.45]{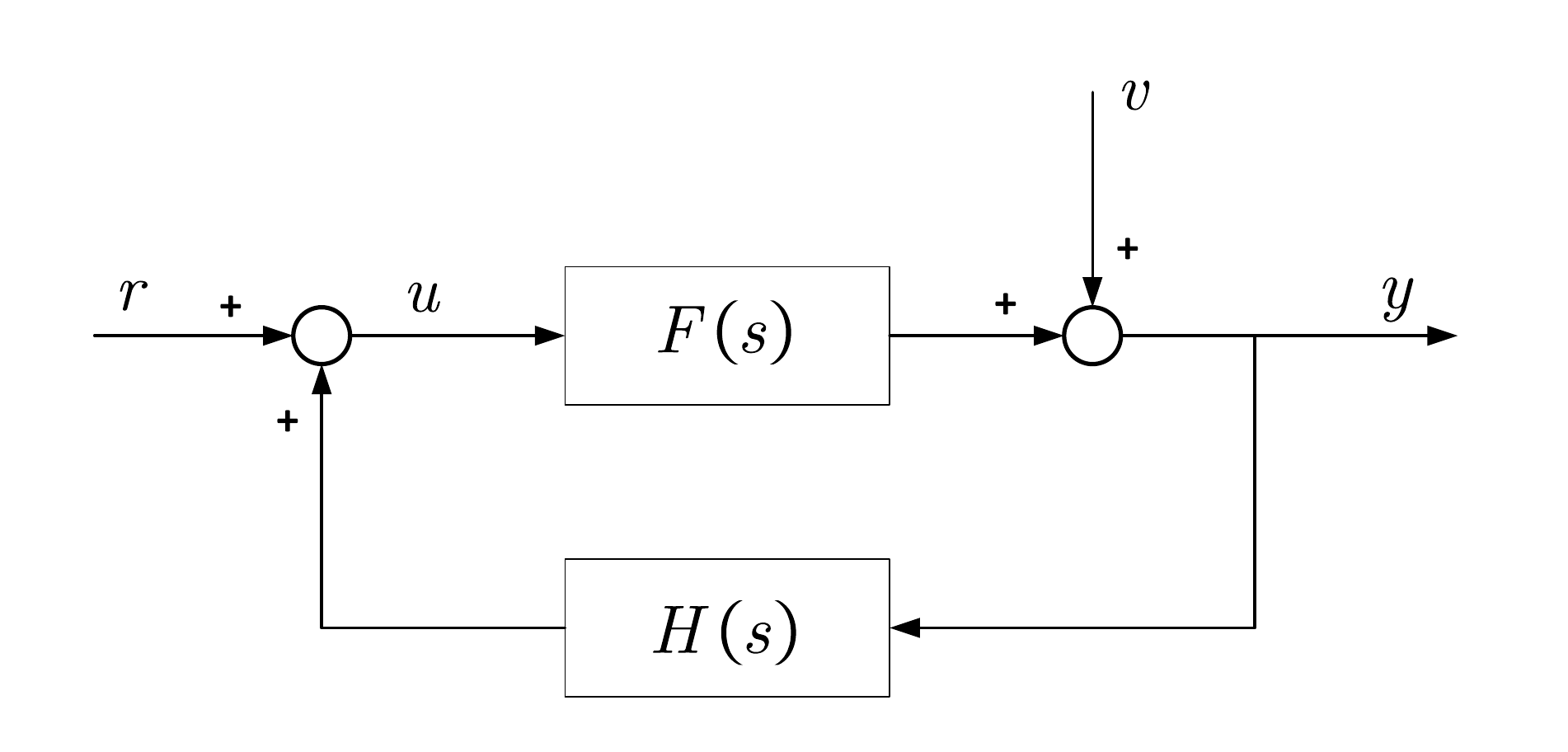}
      \caption{Block diagram illustrating  a feedback model}
      \label{figurelabel}
\end{figure}
Note that the transfer functions  $F(s)$ and $H(s)$ are in general not stable, but the overall feedback configuration needs to be internally stable \cite{DFT}, since that is needed for  all processes to be jointly stationary. The following results are essentially continuous-time versions of results in \cite{Caines}, where however a full-rank requirement to insure uniqueness was imposed.  Here we want to allow for rank-deficient spectral densities.

  \begin{theorem}\label{lem1}
        The transfer function matrix $T(s)$ from $\begin{bmatrix}v\\r\end{bmatrix}$ to $\begin{bmatrix}y\\u\end{bmatrix}$ in the feedback model \eqref{FuHy} is given by
\begin{subequations}\label{TPQ}
\begin{equation}
\label{T}
T(s)=\begin{bmatrix}P(s)&P(s)F(s)\\Q(s)H(s)&Q(s)\end{bmatrix},
\end{equation}
where
\begin{equation}
  \label{PQ}
\begin{split}
P(s)&=(I - F(s)H(s))^{-1},    \\
Q(s)&= (I - H(s)F(s))^{-1}.
\end{split}
\end{equation}
\end{subequations}
 Moreover,
 \begin{equation}
\label{PQFH}
P(s)F(s)=F(s)Q(s), \quad H(s)P(s)=Q(s)H(s),
\end{equation}
 and $T(s)$  is full rank and (strictly) stable.
 \end{theorem}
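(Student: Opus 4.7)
My plan is to prove the theorem by recasting the feedback equations \eqref{FuHy} as a single block matrix equation, inverting that block matrix explicitly, and then reading off the four blocks of $T(s)$. The commutation identities will fall out as by-products of this inversion, the full-rank claim will reduce to a determinant calculation, and the strict stability claim will follow from the joint stationarity requirement already noted in the paragraph preceding the theorem.

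\textbf{Derivation of the formula.} First I would rewrite \eqref{FuHy} in matrix form as
\begin{equation*}
\begin{bmatrix}I & -F(s)\\-H(s) & I\end{bmatrix}\begin{bmatrix}y\\u\end{bmatrix}=\begin{bmatrix}v\\r\end{bmatrix},
\end{equation*}
so that $T(s)$ is formally the inverse of the block matrix on the left, and this inverse exists because $P(s)$ and $Q(s)$ are assumed to exist as rational matrices. Substituting $u=Hy+r$ into $y=Fu+v$ yields $(I-FH)y=v+Fr$, hence $y=Pv+PFr$; the symmetric substitution gives $u=QHv+Qr$. Stacking these two relations produces exactly the structure in \eqref{TPQ}.

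\textbf{The commutation identities.} The identities $PF=FQ$ and $HP=QH$ follow from the elementary rational-function identities $(I-FH)F=F(I-HF)$ and $H(I-FH)=(I-HF)H$. Multiplying the first on the left by $P=(I-FH)^{-1}$ and on the right by $Q=(I-HF)^{-1}$ gives $FQ=PF$, and the second treated analogously gives $HP=QH$. These identities also allow a direct verification that the matrix in \eqref{TPQ} is the inverse of the block matrix displayed above, cross-checking the derivation.

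\textbf{Full rank and strict stability.} For the full-rank statement, I would apply the block determinant formula to obtain $\det T(s)=[\det(I-F(s)H(s))]^{-1}=\det P(s)$, which is a non-zero rational function since $P(s)$ is a well-defined rational matrix. The strict stability claim is the step I expect to be the main obstacle, since it is not purely algebraic. Here I would appeal to the observation, already made in the text preceding the theorem, that the four processes $y,u,v,r$ are jointly stationary with rational spectra; for this to be compatible with the closed-loop relation $[y;u]^\prime=T(s)[v;r]^\prime$ (together with $v$ and $r$ being the one-sided prediction errors in \eqref{yu}), the rational matrix $T(s)$ must be analytic in the closed right half-plane, so that all of its poles lie in the open left half-plane. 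This is exactly the internal stability of the feedback loop of Fig.~\ref{figurelabel} and gives the stated strict stability of $T(s)$.
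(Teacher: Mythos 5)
Your proposal is correct and follows essentially the same route as the paper: rewriting \eqref{FuHy} as $N(s)[y;u]'=[v;r]'$ with $N(s)=\left[\begin{smallmatrix}I & -F\\ -H & I\end{smallmatrix}\right]$, inverting to get \eqref{TPQ}, and deducing strict stability of $T(s)$ from the internal stability forced by joint stationarity of $v,r,y,u$. You additionally spell out the commutation identities \eqref{PQFH} and the determinant argument for full rank, which the paper leaves to the reader, but these are details rather than a different approach.
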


  \medskip

\begin{proof}
The  feedback system in Fig.~\ref{figurelabel} must be internally stable \cite[p. 37]{DFT} since the stationary processes $v$ and $r$ produce stationary processes $y$ and $u$. Hence $T(s)$ is (strictly) stable.
From \eqref{FuHy} we have
\begin{displaymath}
 \begin{bmatrix}y\\ u\end{bmatrix}=\begin{bmatrix}0 & F(s)\\H(s) & 0\end{bmatrix}\begin{bmatrix}y\\ u\end{bmatrix}+ \begin{bmatrix}v\\ r\end{bmatrix}
\end{displaymath}
         and therefore
\begin{displaymath}
N(s)\begin{bmatrix}y\\ u\end{bmatrix}=\begin{bmatrix}v\\ r\end{bmatrix} ,
\end{displaymath}
               where
 \begin{displaymath}
 N(s) :=\begin{bmatrix}I & -F(s)\\-H(s) & I\end{bmatrix}
\end{displaymath}
Now $(I - H(s)F(s))^{-1}$ is the transfer function from $r$ to $u$, and due to internal stability it must be stable. Consequently, $N(s)$ is invertible, and a straightforward calculation shows that $T(s)N(s)=I$ and hence
$T(s) = N(s)^{-1}$, as claimed. The relations \eqref{PQFH} are easy to verify.
 \end{proof}

 Although the errors $v$ and $r$ obtained by the procedure \eqref{yu} may be correlated, there exist feedback models where they are uncorrelated, and we shall assume that this is the case in the sequel. (We shall often suppress the argument $s$ whenever there is no risk of misunderstanding.) To see that this can be done, we provide the following construction. Define
 \begin{equation}
\label{ }
d\hat{w}=W(i\omega)^\dag \begin{bmatrix}d\hat{y}\\d\hat{u}\end{bmatrix},
\end{equation}
where $W^\dag =(W^*W)^{-1}W^*$ is the Moore-Penrose pseudo-inverse. Then, since $W^\dag W=I$, $d\hat{w}$ satisfies \eqref{dwhat}, so we have
\begin{equation}
\label{zetaspecrepr2}
\begin{bmatrix}d\hat{y}\\d\hat{u}\end{bmatrix}=W(i\omega)d\hat{w},
\end{equation}
which is the same as \eqref{zetaspecrepr}. Next, we define the error processes \eqref{vr} via
\begin{equation}
\label{w2vr}
\begin{bmatrix}d\hat{v}\\d\hat{r}\end{bmatrix}= \begin{bmatrix} G(i\omega)&0\\0&K(i\omega)\end{bmatrix}d\hat{w},
\end{equation}
where $G$ is $p\times m_1$ and $K$ is $q\times m_2$ with $m_1\leq p$, $m_2\leq q$ and $m_1+m_2=m$. Clearly the error processes $v$ and $r$ will then be uncorrelated.  Moreover, from Theorem~\ref{lem1}, we have
\begin{displaymath}
W=\begin{bmatrix}P&PF\\QH&Q\end{bmatrix}\begin{bmatrix} G&0\\0&K\end{bmatrix}=
\begin{bmatrix}PG&PFK\\QHG&QK\end{bmatrix}  ,
\end{displaymath}
which, by \eqref{PQFH}, can be written
\begin{displaymath}
W=\begin{bmatrix} W_{11}&W_{12}\\W_{21}&W_{22}\end{bmatrix}=\begin{bmatrix}PG&FQK\\HPG&QK\end{bmatrix},
\end{displaymath}
from which it follows that
\begin{equation}
\label{WFH}
W_{12}(s)=F(s)W_{22}(s), \quad W_{21}(s)=H(s)W_{11}(s)  .
\end{equation}
If $\Phi_\zeta$ is full rank, i.e., $m=p+q$,  $m_1=p$ and $m_2=q$, then
\begin{equation}
\label{FHfullrank}
F=W_{12}W_{22}^{-1}, \qquad H=W_{21}W_{11}^{-1} .
\end{equation}
Likewise, if $m_2=q$, we can recover $F$ from \eqref{FHfullrank}, and, if $m_1=p$, we can recover $H$. In general, $F$ and $H$ can only be partially recovered from \eqref{WFH}. A parametrized family of solutions $F$ may be obtained by adding columns to  $W_{22}$ so that it becomes square and full rank and likewise adding parametrized columns to $W_{12}$, yielding a family of solutions via the first relation in \eqref{FHfullrank}. The same can be done for $H$. However, in the sequel, we shall proceed in a different way.

We have established that, to each stable, minimal spectral factor $W$ of $\Phi_\zeta$ there corresponds a feedback model \eqref{FuHy}, and
\begin{equation}
\label{Phivr}
\Phi_\zeta(s)=T(s)\begin{bmatrix}\Phi_v(s)&0\\0&\Phi_r(s)\end{bmatrix}T(s)^*,
\end{equation}
where $\Phi_v(s)$ and $\Phi_r(s)$ are the spectral densities of $v$ and $r$, respectively, and $^*$ denotes transpose conjugate. Since $T(i\omega)$ has full rank a.e., $\Phi_\zeta$ is rank deficient if and only if at least one of $\Phi_v$ or $\Phi_r$ is.

 \medskip

 \begin{theorem}\label{lem2}
Suppose $(H\Phi_vH^*+\Phi_r)$ is positive definite a.e. on the imaginary axis. Then
\begin{equation}
\label{F}
F=\Phi_{yu}\Phi_u^{-1} - \Phi_vH^*(H\Phi_vH^*+\Phi_r)^{-1}(I-HF),
\end{equation}
that is
\begin{equation}
\label{Fred}
F=\Phi_{yu}\Phi_u^{-1}
\end{equation}
if and only if $\Phi_v H^*\equiv 0$.
\end{theorem}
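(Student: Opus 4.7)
The plan is to work with the spectral representations \eqref{vr} together with the defining feedback equations \eqref{FuHy} and exploit the uncorrelatedness of $v$ and $r$ to express $\Phi_{yu}$ and $\Phi_u$ in terms of $F$, $H$, $\Phi_v$ and $\Phi_r$, then solve for $F$. Concretely, starting from $d\hat{u}=Hd\hat{y}+d\hat{r}$ and $d\hat{y}=Fd\hat{u}+d\hat{v}$, I would first derive the closed-loop relation
\begin{equation*}
d\hat{u}=Q(i\omega)\bigl(H(i\omega)d\hat{v}+d\hat{r}\bigr),
\end{equation*}
where $Q=(I-HF)^{-1}$ exists by Theorem~\ref{lem1}. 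Using $\mathbb{E}\{d\hat{v}d\hat{r}^*\}=0$, this immediately gives
\begin{equation*}
\Phi_{vu}=\Phi_vH^*Q^*,\qquad \Phi_u=Q(H\Phi_vH^*+\Phi_r)Q^*,
\end{equation*}
and the hypothesis that $H\Phi_vH^*+\Phi_r$ is positive definite a.e.\ on the imaginary axis (combined with invertibility of $Q$) guarantees that $\Phi_u^{-1}$ exists a.e.

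Next, from $d\hat{y}=Fd\hat{u}+d\hat{v}$ and the definition of $\Phi_{yu}$ we get the identity $\Phi_{yu}=F\Phi_u+\Phi_{vu}=F\Phi_u+\Phi_vH^*Q^*$. Post-multiplying by $\Phi_u^{-1}$ and solving yields
\begin{equation*}
F=\Phi_{yu}\Phi_u^{-1}-\Phi_vH^*Q^*\Phi_u^{-1}.
\end{equation*}
I would then simplify the correction factor by substituting the explicit form of $\Phi_u$:
\begin{equation*}
Q^*\Phi_u^{-1}=(H\Phi_vH^*+\Phi_r)^{-1}Q^{-1}=(H\Phi_vH^*+\Phi_r)^{-1}(I-HF),
\end{equation*}
which delivers \eqref{F} exactly.

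For the equivalence, the direction $\Phi_vH^*\equiv 0\Rightarrow F=\Phi_{yu}\Phi_u^{-1}$ is obvious from \eqref{F}. For the converse, assuming the correction term vanishes, I post-multiply by $Q=(I-HF)^{-1}$ (which exists) and then by $(H\Phi_vH^*+\Phi_r)$ (which is invertible by hypothesis) to conclude $\Phi_vH^*\equiv 0$.

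The only technical point requiring care will be the invertibility claim for $\Phi_u$, but this follows cleanly from writing $\Phi_u=Q(H\Phi_vH^*+\Phi_r)Q^*$ and invoking the standing hypothesis together with Theorem~\ref{lem1}; no further obstacles are anticipated, since all remaining manipulations are algebraic once the closed-loop formulas for $d\hat{u}$, $\Phi_{vu}$ and $\Phi_u$ are in hand.
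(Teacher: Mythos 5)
Your proof is correct and follows essentially the same route as the paper: both arguments reduce to the key identity $\Phi_{yu}-F\Phi_u=\Phi_v H^*Q^*$ obtained from the closed-loop expressions $\Phi_u=Q(H\Phi_vH^*+\Phi_r)Q^*$ and $\Phi_{yu}=F\Phi_u+\Phi_v H^*Q^*$, the only cosmetic difference being that you work directly with the spectral measures $d\hat v,d\hat r$ while the paper reads the same blocks off the matrix identity \eqref{Phivr} using \eqref{PQFH}. Your write-up is in fact slightly more explicit than the paper's on the final algebraic step $Q^*\Phi_u^{-1}=(H\Phi_vH^*+\Phi_r)^{-1}(I-HF)$ and on the converse direction of the equivalence.
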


\medskip

\begin{proof}
Given \eqref{Phidecomp}, \eqref{TPQ} and \eqref{Phivr}, we have
\begin{align*}
   \Phi_{yu}&= P(\Phi_vH^* +F\Phi_r)Q^*=P\Phi_vH^*Q^* + FQ\Phi_rQ^*  \\
   \Phi_u  &=  Q(H\Phi_vH^*+\Phi_r)Q^*=HP\Phi_vH^*Q^*+Q\Phi_rQ^* ,
\end{align*}
where we have used \eqref{PQFH}, i.e., $PF=FQ$ and $HP=QH$.
Hence
\begin{displaymath}
 \Phi_{yu}-F \Phi_u  = \Phi_vH^*Q^*,
\end{displaymath}
from which \eqref{F} follows.
\end{proof}

\section{Deterministic dynamical models}\label{sec:determ}

It was shown in \cite{GLsampling} that there are $p+q-m$ deterministic dynamical relations between the entries of $\zeta$ if
\[
\rank(\Phi_\zeta(i\omega))=m, \mbox{ a.e., for }\omega\in\mathbb R
\]
so that we have a map such as in Figure~\ref{system}. Taking $q=m$, this may correspond to the situation that $\Phi_v=0$, which, by \eqref{Phivr}, requires that $\rank(\Phi_r)=m$. Also, by Theorem~\ref{lem2}, \eqref{Fred} holds.

Consequently,  we have a feedback model
\begin{subequations}\label{FuHyv=0}
           \begin{align}
    y &= F(s)u\\
    u & = H(s)y + r ,
 \end{align}
  \end{subequations}
where a nontrivial $H(s)$ will permit $F(s)$ to be unstable, as the feedback will stabilize the feedback loop.

For a realization of $F(s)$ we refer to Theorem 1 in \cite{GLsampling}, where  the standard matrix notation
\begin{displaymath}
C(sI-A)^{-1}B+D=:\left[\begin{array}{c|c}A&B\\\hline C&D\end{array}\right].
\end{displaymath}\
for proper transfer functions is used.

\medskip

\begin{theorem}[Georgiou-Lindquist \cite{GLsampling}] \label{thm:GLsampling}
With $W,A$, $B$ and $C$ as above, re-order the rows of $C$ and partition
\[
C=\left(\begin{matrix}C_1\\C_0\end{matrix}\right)
\]
so that $C_0B$ is $m\times m$ and invertible. Reorder in the same way the entries of $\zeta$,  and as in \eqref{eq:z}, let $y$ represent the first $p$ entries and $u$ the remaining $q=m$. Then,
\begin{equation}
W(s)= \left(\begin{matrix} F(s)\\I\end{matrix}\right)M(s)s^{-1},
\end{equation}
where
\begin{subequations}
\begin{align}\nonumber\\[-.05in]
F(s)&=\left[\begin{array}{c|c}\Gamma &B (C_0B)^{-1}\\\hline C_1\Gamma &C_1B(C_0B)^{-1}\end{array}\right],\label{Ffinal}\\
M(s)&=\left[\begin{array}{c|c}A&B\\\hline C_0A&C_0B\end{array}\right], \label{M} \mbox{ and}\\
\Gamma&=A-B(C_0B)^{-1}C_0A. \label{Gamma}
\end{align}
\end{subequations}
\end{theorem}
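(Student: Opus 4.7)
The plan is to construct an alternative state representation in which the $m$-dimensional output $u=C_0x$ plays the role of the driving input in place of the white noise $w$, and then to read off $F(s)$ and $M(s)$ directly from that representation. The enabling observation is that $C_0B$ is square and invertible by hypothesis, which lets us algebraically solve for $w$ once we have access to the (formal) derivative of $u$.

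Concretely, treating \eqref{eq:continuous} as an identity between rational transfer functions, write the state equation in frequency-domain form $sX=AX+BW$. Applying $C_0$ and using $U=C_0X$ yields $sU=C_0AX+C_0BW$, and invertibility of $C_0B$ gives $W=(C_0B)^{-1}(sU-C_0AX)$. Substituting this back into $sX=AX+BW$ and collecting the $X$-terms produces $(sI-\Gamma)X=B(C_0B)^{-1}sU$, with $\Gamma=A-B(C_0B)^{-1}C_0A$ exactly as in \eqref{Gamma}; hence $X=s(sI-\Gamma)^{-1}B(C_0B)^{-1}U$. Applying $C_1$ to this and invoking the elementary identity $s(sI-\Gamma)^{-1}=I+\Gamma(sI-\Gamma)^{-1}$ to split off the feedthrough term, I obtain
\[
y=C_1B(C_0B)^{-1}u+C_1\Gamma(sI-\Gamma)^{-1}B(C_0B)^{-1}u,
\]
which is precisely $F(s)u$ with $F$ in the realization form \eqref{Ffinal}.

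For the bottom block of the claimed factorization, apply the same identity to $C_0(sI-A)^{-1}B$, the transfer function from $w$ to $u$, to obtain $s\,C_0(sI-A)^{-1}B=C_0B+C_0A(sI-A)^{-1}B=M(s)$ as in \eqref{M}. Thus $u=M(s)s^{-1}w$ and, combined with $y=F(s)u$, this gives $W(s)=\left(\begin{matrix}F(s)\\I\end{matrix}\right)M(s)s^{-1}$. The points that need care are (i) that the manipulations of $sX$, $sU$, $sW$ are formal algebraic operations on rational matrices rather than pointwise differentiations of stochastic processes, which is unambiguous because all objects involved are rational; and (ii) that $M(s)s^{-1}$ is in fact proper despite the factor $s^{-1}$, since a direct computation gives $M(0)=C_0B-C_0AA^{-1}B=0$, so the apparent pole at the origin cancels. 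Beyond the standing minimality and stability hypotheses on $(A,B,C)$, the only structural ingredient used is the invertibility of $C_0B$, and I therefore do not anticipate a deeper obstacle.
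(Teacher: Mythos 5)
Your proof is correct, and it takes a genuinely different route through the middle of the argument. The paper first identifies $F$ statistically, via $F=\Phi_{yu}\Phi_u^{-1}$ from \eqref{Fred}, which with the factorization \eqref{MN2Phi} gives $F=NM^{-1}$; it then computes $M(s)^{-1}$ explicitly using the realization-inversion formula of Lemma~\ref{inverse} and simplifies $NM^{-1}$ to reach \eqref{Falt} and hence \eqref{Ffinal}. You instead bypass both the feedback-model machinery and the inversion lemma: you eliminate the noise $w$ directly from the frequency-domain state equations using the invertibility of $C_0B$, obtaining $(sI-\Gamma)X=sB(C_0B)^{-1}U$ and thus $N=FM$ by construction rather than $F=NM^{-1}$ by inversion. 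Both arguments pass through the same pivot identity $F(s)=sC_1(sI-\Gamma)^{-1}B(C_0B)^{-1}$ and the same feedthrough split $s(sI-\Gamma)^{-1}=I+\Gamma(sI-\Gamma)^{-1}$, so they converge at the end. What your route buys is self-containedness and transparency: the factorization $W=\bigl(\begin{smallmatrix}F\\I\end{smallmatrix}\bigr)Ms^{-1}$ is established as a pure rational-matrix identity without invoking Theorem~\ref{lem2} or the hypothesis $\Phi_vH^*\equiv 0$, and your observation that $M(0)=0$ cleanly disposes of the apparent pole at the origin. What the paper's route buys is the interpretation that this $F$ is precisely the Wiener-filter transfer function $\Phi_{yu}\Phi_u^{-1}$ of the feedback model, i.e., that the algebraic $F$ coincides with the one defined by \eqref{u2y}; your derivation shows that \emph{some} proper rational $F$ with realization \eqref{Ffinal} maps $u$ to $y$, and since $\Phi_v=0$ in this deterministic setting that suffices, but it is worth noting that the identification with the estimator in \eqref{yu} is supplied by the surrounding text rather than by your argument itself.
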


\medskip

For reasons of lack of  space,  only a short outline of a proof of Theorem~\ref{thm:GLsampling} was given in \cite{GLsampling}. For the benefit of the reader we shall therefore give a more complete proof, which will also allow us to set up notation.

\medskip

\begin{proof}
First note that
\begin{align*}
(sI-A)^{-1}= & s^{-1}(sI-A+A)(sI-A)^{-1} \\
 = & s^{-1}I +s^{-1}A(sI-A)^{-1}.
\end{align*}
Therefore, partitioning $W$ conformably with $C$, we have
\begin{equation}
\label{ }
W(s)=\left(\begin{matrix}N(s)\\M(s)\end{matrix}\right)s^{-1},
\end{equation}
where
\begin{subequations}\label{MN}
\begin{eqnarray}
M(s) & = & C_0B+C_0A(sI-A)^{-1}B \\
N(s) & = & C_1B+C_1A(sI-A)^{-1}B .
\end{eqnarray}
\end{subequations}
Now, in view of \eqref{eq:factorization},
\begin{equation}
\label{MN2Phi}
\begin{bmatrix} \Phi_y & \Phi_{yu}\\\Phi_{uy} &\Phi_u\end{bmatrix}=
\begin{bmatrix} NN^*& NM^*\\MN^* &MM^*\end{bmatrix} \frac{1}{|s|^2},
\end{equation}
and therefore, it follows from \eqref{Fred} that
\begin{equation}
\label{MN2F}
F(s)= N(s)M(s)^{-1},
\end{equation}
which yields
\begin{equation}
\label{Fprel}
F(s)=C_1\left[I+A(sI-A)^{-1}\right]BM(s)^{-1} .
\end{equation}
From  \eqref{M} and Lemma~\ref{inverse} in the Appendix we have
\begin{displaymath}
M(s)^{-1}=\left[\begin{array}{c|c}\Gamma&B(C_0B)^{-1}\\\hline -(C_0B)^{-1}C_0A&(C_0B)^{-1}\end{array}\right],
\end{displaymath}
and therefore, since $B(C_0B)^{-1}C_0A=A-\Gamma$ by \eqref{Gamma},
\begin{align*}
  BM(s)^{-1} &=\left[I-B(C_0B)^{-1}C_0A(sI-\Gamma)^{-1}\right]B(C_0B)^{-1}   \\
                   &  =(sI-A)(sI-\Gamma)^{-1}B(C_0B)^{-1},
\end{align*}
which together with \eqref{Fprel} yields
\begin{align*}
  F(s)  &=C_1s(sI-\Gamma)^{-1}B(C_0B)^{-1} \\
    &=C_1(sI-\Gamma +\Gamma) (sI-\Gamma)^{-1}B(C_0B)^{-1} \\
    & =C_1B (C_0B)^{-1} + C_1\Gamma(sI-\Gamma)^{-1}B(C_0B)^{-1},
\end{align*}
which is precisely \eqref{Ffinal}, as required.
\end{proof}

\medskip

\begin{remark}
Although $M$ and $N$ in \eqref{MN} clearly depend on the particular choice of stable spectral factor $W$, the representation \eqref{Ffinal} does not. Indeed, \eqref{Ffinal} is constructed from \eqref{MN2F}, and, in view of \eqref{MN2Phi},
\begin{displaymath}
NM^{-1}=NM^*(MM^*)^{-1}=\Phi_{yu}\Phi_u^{-1},
\end{displaymath}
which depends only on the spectral density $\Phi_\zeta$.
\end{remark}

\medskip

\begin{remark}\label{mnrem}
Since $C_0\in\mathbb{R}^{m\times n}$ and $B\in\mathbb{R}^{n\times m}$, it is necessary that $m\leq n$ for $C_0B$ to be nonsingular.
\end{remark}

\medskip

We also refer the reader to a recent paper \cite{Deistler19}, which deals with a theme akin to that in Theorem~\ref{thm:GLsampling}.

Next we would like to investigate under what conditions $F(s)$ is strictly stable, but let us first consider whether this will be prevented by the following result.

\medskip

\begin{lemma}\label{Gammarank}
Suppose $B(C_0B)^{-1}C_0$ has $n$ linearly independent eigenvectors. Then
\begin{displaymath}
\rank(\Gamma) = n-m.
\end{displaymath}
\end{lemma}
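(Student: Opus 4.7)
The plan is to recognize the matrix $\Pi := B(C_0B)^{-1}C_0$ as an idempotent (oblique projector) on $\mathbb{R}^n$, compute its rank, and then factor $\Gamma$ through this projector.

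First, I would verify that $\Pi$ is idempotent by direct calculation: using $C_0 B (C_0B)^{-1} = I_m$, one finds $\Pi^2 = B(C_0B)^{-1}(C_0 B)(C_0B)^{-1}C_0 = B(C_0B)^{-1}C_0 = \Pi$. Hence the eigenvalues of $\Pi$ lie in $\{0,1\}$. Under the stated hypothesis that $\Pi$ admits $n$ linearly independent eigenvectors, the algebraic and geometric multiplicities of these two eigenvalues sum to $n$, and the multiplicity of the eigenvalue $1$ equals $\operatorname{rank}(\Pi)$.

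Next, I would compute $\operatorname{rank}(\Pi)$ via the trace identity for idempotent matrices, $\operatorname{rank}(\Pi) = \operatorname{trace}(\Pi)$, and use the cyclic property of the trace:
\begin{equation*}
\operatorname{trace}\!\bigl(B(C_0B)^{-1}C_0\bigr) = \operatorname{trace}\!\bigl((C_0B)^{-1}(C_0B)\bigr) = \operatorname{trace}(I_m) = m.
\end{equation*}
Consequently $\operatorname{rank}(I - \Pi) = n - m$, since $I-\Pi$ is the complementary spectral projector.

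Finally, I would rewrite the defining formula \eqref{Gamma} as $\Gamma = A - \Pi A = (I - \Pi)A$. Because $A$ is a stability matrix it has no zero eigenvalues and is therefore invertible, so right-multiplication by $A$ preserves rank. This gives
\begin{equation*}
\operatorname{rank}(\Gamma) = \operatorname{rank}\!\bigl((I - \Pi)A\bigr) = \operatorname{rank}(I - \Pi) = n - m,
\end{equation*}
as claimed. No serious obstacle is anticipated; the only subtlety is noticing that $\Pi$ is a projector so that its rank can be read off from its trace, after which the diagonalizability assumption simply guarantees that algebraic and geometric multiplicities agree.
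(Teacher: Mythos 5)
Your proof is correct, and it reaches the conclusion by a genuinely different (and slightly stronger) route than the paper. The paper does not observe that $\Pi = B(C_0B)^{-1}C_0$ is idempotent; instead it invokes an appendix lemma stating that the nonzero eigenvalues of $B(C_0B)^{-1}C_0$ coincide with those of $C_0B(C_0B)^{-1}=I_m$, concludes that $\Pi$ has exactly $m$ eigenvalues equal to $1$ and $n-m$ equal to $0$, and then uses the hypothesis of $n$ linearly independent eigenvectors to diagonalize $\Pi$ explicitly as $\mathrm{diag}(I_m,0_{n-m})$, after which $\Gamma=(I-\Pi)A$ with $A$ invertible gives the result. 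Your argument replaces the eigenvalue-similarity lemma and the explicit diagonalization with the identities $\Pi^2=\Pi$ and $\operatorname{rank}(\Pi)=\operatorname{trace}(\Pi)=m$, arriving at the same final step. A bonus of your approach worth making explicit: since an idempotent matrix is always diagonalizable (its minimal polynomial divides $\lambda(\lambda-1)$), the lemma's hypothesis that $B(C_0B)^{-1}C_0$ has $n$ linearly independent eigenvectors is automatically satisfied and could be dropped entirely; in your write-up you still lean on it rhetorically (``under the stated hypothesis\dots''), but your own idempotency computation shows it is redundant. The paper's route, by contrast, genuinely needs that hypothesis because it never establishes idempotency.
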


\medskip

\begin{proof}
By Lemma~\ref{similaritylem} in the Appendix, the nonzero eigenvalues of $B(C_0B)^{-1}C_0$ are the same as those of $C_0B(C_0B)^{-1} = I_{m}$, so $B(C_0B)^{-1}C_0$ has $m$ nonzero eigenvalues all equal to 1. Then there is an $n\times n$ matrix $P$ such that
\begin{displaymath}
P^{-1}B(C_0B)^{-1}C_0P= \begin{bmatrix}I_m&  \\&0_{n-m}\end{bmatrix},
\end{displaymath}
and consequently
\begin{displaymath}
P^{-1}(I-B(C_0B)^{-1}C_0)P=\begin{bmatrix}0_m&  \\&I_{n-m}\end{bmatrix}.
\end{displaymath}
Then, since $\Gamma=(I-B(C_0B)^{-1}C_0)A$ and $A$ has full rank for being a stability matrix, the lemma follows.
\end{proof}

\medskip

Consequently, since $m>0$, $\Gamma$ will be singular under the conditions of Lemma~\ref{Gammarank}. Reformulating  \eqref{Ffinal} we see that
\begin{equation}
\label{Falt}
F(s) = sC_1(sI - \Gamma)^{-1}B(C_0B)^{-1},
\end{equation}
and consequently at least one zero eigenvalue of $\Gamma$ will cancel, reducing the McMillan degree of $F(s)$. However, by the next theorem, in general  all zero eigenvalues will be cancelled, so the degree of $F(s)$ is reduced to the number of nonzero eigenvalues of $\Gamma$.

\medskip

\begin{theorem}\label{McMillanthm}
Suppose $B(C_0B)^{-1}C_0$ has $n$ linearly independent eigenvectors. Then $F(s)$, given by \eqref{Ffinal}, has McMillan degree at most $n-m$.
\end{theorem}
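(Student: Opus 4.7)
The plan is to exhibit an explicit state-space realization of $F(s)$ of dimension $n-m$, thereby bounding the McMillan degree. The starting point is the alternative formula \eqref{Falt}, $F(s) = sC_1(sI-\Gamma)^{-1}B(C_0B)^{-1}$, together with the similarity transformation already used in the proof of Lemma~\ref{Gammarank}: under the diagonalizability hypothesis there is an invertible $n\times n$ matrix $P$ such that
\[
P^{-1}B(C_0B)^{-1}C_0\,P = \begin{bmatrix} I_m & 0\\ 0 & 0_{n-m}\end{bmatrix},\qquad P^{-1}\bigl(I-B(C_0B)^{-1}C_0\bigr)P = \begin{bmatrix} 0_m & 0\\ 0 & I_{n-m}\end{bmatrix}.
\]

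The key observation is that in these coordinates $B$ has a very special form. Since $(I - B(C_0B)^{-1}C_0)B = B - B(C_0B)^{-1}(C_0B) = 0$, every column of $B$ lies in the $+1$ eigenspace of $B(C_0B)^{-1}C_0$, which forces $P^{-1}B = \begin{bmatrix} B_1\\ 0\end{bmatrix}$ for some $m\times m$ block $B_1$; since $B$ has full column rank $m$, $B_1$ is invertible. Writing $\tilde A := P^{-1}AP = \begin{bmatrix}\tilde A_{11} & \tilde A_{12}\\ \tilde A_{21} & \tilde A_{22}\end{bmatrix}$ conformably, the identity $\Gamma = (I-B(C_0B)^{-1}C_0)A$ then gives
\[
P^{-1}\Gamma\,P = \begin{bmatrix} 0 & 0\\ \tilde A_{21} & \tilde A_{22}\end{bmatrix},
\]
a block lower triangular matrix with an $m\times m$ zero block in its upper left.

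Inverting $sI - P^{-1}\Gamma P$ in block-triangular form exposes the $m$ zero eigenvalues of $\Gamma$ as factors of $s^{-1}$, and each of these is cancelled exactly by the leading factor of $s$ in \eqref{Falt}. Carrying out the resulting algebra with $C_1P$ partitioned as $[\tilde C_1^{(1)}\ \tilde C_1^{(2)}]$ yields the realization
\[
F(s) = \tilde C_1^{(1)} B_1 (C_0B)^{-1} + \tilde C_1^{(2)}(sI_{n-m} - \tilde A_{22})^{-1}\tilde A_{21} B_1 (C_0B)^{-1},
\]
whose state dimension is $n-m$. Since the McMillan degree is bounded by the dimension of any realization, the claim follows. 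There is no real obstacle here: everything rests on the single fact that $B$ lies entirely in the $+1$ eigenspace of $B(C_0B)^{-1}C_0$, so that after the similarity $P$ both $\Gamma$ and the effective input matrix are supported on the complementary $(n-m)$-dimensional block; the diagonalizability hypothesis is exactly what guarantees a clean splitting into these two blocks.
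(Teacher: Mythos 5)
Your proof is correct, but it takes a genuinely different route from the paper's. The paper argues by unobservability: the realization \eqref{Ffinal} has output matrix $C_1\Gamma$ and state matrix $\Gamma$, so its observability matrix factors as $[\,C_1';\ \Gamma'C_1';\ \dots\,]'\,\Gamma$ and has rank at most $\rank(\Gamma)=n-m$ by Lemma~\ref{Gammarank}; hence the unobservable subspace has dimension at least $m$ and a minimal realization has dimension at most $n-m$. You instead construct an explicit $(n-m)$-dimensional realization, and your construction checks out: the identity $(I-B(C_0B)^{-1}C_0)B=0$ does place the columns of $B$ in the eigenvalue-one eigenspace of the (rank-$m$, idempotent) matrix $B(C_0B)^{-1}C_0$, so $P^{-1}B=[B_1';\,0]'$ with $B_1$ invertible, $P^{-1}\Gamma P$ is block lower triangular with zero $m\times m$ upper-left block, and the factor $s$ in \eqref{Falt} cancels the $s^{-1}$ coming from that block, leaving exactly the quadruple $(\tilde A_{22},\ \tilde A_{21}B_1(C_0B)^{-1},\ \tilde C_1^{(2)},\ \tilde C_1^{(1)}B_1(C_0B)^{-1})$ you display. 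The paper's argument is shorter and needs only the rank count of Lemma~\ref{Gammarank}; yours buys more, since it produces the reduced-order realization explicitly (which is what one actually wants in the examples of Section~\ref{sec:examples}) and makes transparent the mechanism by which the $m$ zero eigenvalues of $\Gamma$ are cancelled by the leading $s$ in \eqref{Falt}. One further remark worth making: since $B(C_0B)^{-1}C_0$ is idempotent, it is \emph{always} diagonalizable with $n$ independent eigenvectors, so the hypothesis of the theorem is automatically satisfied; your eigenspace argument essentially exposes this, whereas the paper treats diagonalizability as a genuine assumption.
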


\medskip

\begin{proof}
The observability matrix of \eqref{Ffinal} is
\begin{displaymath}
\begin{bmatrix}
C_1\Gamma\\
C_1\Gamma^2\\
\vdots\\
C_1\Gamma^{p-m}
\end{bmatrix}
=
\begin{bmatrix}
C_1\\
C_1\Gamma\\
\vdots\\
C_1\Gamma^{p-m-1}
\end{bmatrix}
\Gamma ,
\end{displaymath}
which has at most the same rank as $\Gamma$. Hence, by Lemma~\ref{Gammarank}, the realization \eqref{Ffinal} is not observable, so it is not minimal. In fact, the dimension of the unobservable subspace is at least $m$, so there is a  realization of $F(s)$ with a dimension that has been reduced with $m$, i.e., from $n$ to $n-m$.
\end{proof}

\medskip

\begin{remark}
If $m = n > 0$, $B$ and $C_0$ are both $m\times m$ matrices, which, in view of the condition rank$(C_0B)=m$,  must be invertible. Then $ \Gamma = (I - B(C_0B)^{-1}C_0)A = 0$, and consequently
    $$F(s) = C_1C_0^{-1}$$
is constant. In this case, all eigenvalues of $\Gamma$ are zero, but all are canceled in forming $F(s)$, resulting in a strictly stable $F(s)$.
\end{remark}

\section{Granger causality and stability}\label{sec:Granger}

Suppose that we want to predict the future of $y$ given the past of $y$, would we get a better estimate if we also would know the past of $u$? If the answer is affirmative, we have {\em Granger causality from $u$ to $y$} \cite{Granger63,Granger69,BS17,PD12,ADD19}. In mathematical terms this can be written \cite[Definition 1]{Granger69}
\begin{equation}
\label{ }
\mathbb{E}^{\Hb_t^-(y)\vee\Hb_t^-(u)}\lambda\ne\mathbb{E}^{\Hb_t^-(y)}\lambda \quad \text{for $\lambda\in\Hb_t^+(y)$},
\end{equation}
where $\mathbb{E}^{\mathbf{A}}\lambda$ denotes the orthogonal projection of $\lambda$ onto the subspace $\mathbf{A}$ and $\vee$ is vector sum, i.e., $\mathbf{A}\vee\mathbf{B}$ is the closure in the Hilbert space of stochastic variables of the sum of the subspaces $\mathbf{A}$ and $\mathbf{B}$; see, e.g., \cite{LPbook}.

It is easier to analyze the negative statement. We have non-causality from $u$ to $y$ in the sense of Granger if
\begin{equation}
\label{noncausality}
\mathbb{E}^{\Hb_t^-(y)\vee\Hb_t^-(u)}\lambda =\mathbb{E}^{\Hb_t^-(y)}\lambda \quad \text{for $\lambda\in\Hb_t^+(y)$},
\end{equation}
which we may also write, with $A\ominus B$ the orthogonal complement of $B\subset A$ in $A$,
\begin{displaymath}
\mathbb{E}^{\Hb_t^-(y)}\lambda +\mathbb{E}^{[\Hb_t^-(y)\vee\Hb_t^-(u)]\ominus\Hb_t^-(y)}\lambda =\mathbb{E}^{\Hb_t^-(y)}\lambda
\end{displaymath}
for all $\lambda\in\Hb_t^+(y)$, which is equivalent to
\begin{displaymath}
\left[\Hb_t^-(y)\vee\Hb_t^-(u)\right]\ominus\Hb_t^-(y)\perp \Hb_t^+(y),
\end{displaymath}
where $\mathbf{A}\perp\mathbf{B}$ means that the subspaces $\mathbf{A}$ and $\mathbf{B}$ are orthogonal. Then, using the equivalence between properties (i) and (v) in \cite[Proposition 2.4.2]{LPbook}, we see that this in turn is equivalent to
\begin{equation}
\label{condorth}
\Hb_t^-(u)\perp \Hb_t^+(y)\mid \Hb_t^-(y),
\end{equation}
i.e., $\Hb_t^-(u)$ and $\Hb_t^+(y)$ are conditionally orthogonal given $\Hb_t^-(y)$.

Hence \eqref{condorth} is geometric condition for lack of Granger causality. Now returning to the feedback model, and in particular to \eqref{u2y}, we see from \eqref{condorth} that once the past of $y$ is known, the future of $y$ is uncorrelated to the past of $u$, and therefore $\mathbb{E}\{y(t)\mid \Hb_t^-(u)\}=0$, so lack of Granger causality is equivalent to $F(s)\equiv 0$. Conversely, we have Granger causality from $u$ to $y$ if and only if $F(s)$ is nonzero.

Applying an analogous argument to \eqref{y2u}, we see that the geometric conditon
\begin{equation}
\label{feedbackfree}
\Hb^-(y)\perp \Hb^+(u)\mid \Hb^-(u) ;
\end{equation}
is equivalent to $H(s)\equiv 0$. Then there is no feedback from $y$ to $u$  \cite[p. 677]{LPbook}. Consequently, as stressed in \cite{Caines}, Granger causality and feedback are dual concepts.

In the setting of Section III we must have $H(s)$ nonzero if $F(s)$ is not strictly stable, because it is needed for stabilization of the feedback loop. Conversely, if $H(s)$ is zero, $F(s)$ must be strictly stable.

\medskip

\begin{theorem}
Consider the feedback model \eqref{FuHy}, and in particular, \eqref{FuHyv=0}. Then there is causality from $u$ to $y$ in  the sense of Granger if and only if $F(s)$ is nonzero, and there is no feedback from $y$ to $u$ if and only if $H(s)$ is identically zero. In this case $F(s)$ is (strictly) stable.
\end{theorem}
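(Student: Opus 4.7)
The plan is to assemble the three claims by citing the geometric equivalences derived in the discussion just above the theorem statement and then invoking Theorem~\ref{lem1} for the final stability assertion; almost no new calculation should be required.

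First I would handle Granger causality from $u$ to $y$. The paragraph preceding the theorem already showed that lack of Granger causality is equivalent to the conditional orthogonality $\Hb_t^-(u)\perp \Hb_t^+(y)\mid \Hb_t^-(y)$, which via \eqref{u2y} forces $\mathbb{E}\{y(t)\mid \Hb_t^-(u)\}\equiv 0$, i.e., $F(s)\equiv 0$. Contrapositively, Granger causality holds if and only if $F(s)\not\equiv 0$. The dual argument, applied to \eqref{y2u} with the roles of $u$ and $y$ swapped, gives the equivalence between absence of feedback from $y$ to $u$ and $H(s)\equiv 0$; this was exactly the content of the discussion around \eqref{feedbackfree}.

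It then remains to show that $H(s)\equiv 0$ forces $F(s)$ to be strictly stable. Setting $H\equiv 0$ in \eqref{PQ} gives $P(s)=Q(s)=I$, so the transfer matrix \eqref{T} from $(v,r)$ to $(y,u)$ collapses to
\[
T(s)=\begin{bmatrix} I & F(s)\\ 0 & I\end{bmatrix}.
\]
By Theorem~\ref{lem1} the matrix $T(s)$ is strictly stable, and hence so is its $(1,2)$-block $F(s)$.

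There is no real obstacle here; the only subtle point is making sure that strict stability of $F(s)$ is read off directly as a block of $T(s)$, without having to worry about pole--zero cancellations between $F$ and $H$ in \eqref{TPQ}. Because $H\equiv 0$ removes all feedback, no such cancellation can occur, so the poles of $F(s)$ are precisely among those of $T(s)$ and therefore lie in the open left half plane.
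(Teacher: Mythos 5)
Your proposal is correct and follows essentially the same route as the paper, whose ``proof'' of this theorem is precisely the geometric discussion preceding it: the equivalence of non-causality with the conditional orthogonality condition \eqref{condorth} (hence $F\equiv 0$), and the dual condition \eqref{feedbackfree} for $H\equiv 0$. Your explicit evaluation of $T(s)$ at $H\equiv 0$ via Theorem~\ref{lem1} merely makes concrete the paper's verbal claim that internal stability of the loop forces $F(s)$ to be strictly stable in the feedback-free case.
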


\medskip

It could be argued that a better (and stronger) definition of causality of $F(s)$ in the present setting is  \eqref{feedbackfree}, namely that is there is no feedback from $y$ to $u$ \cite[Section~2.6.5]{LPbook}.

\section{Examples}\label{sec:examples}

\subsection{Example 1}

Let $\Phi_\zeta(s)$ be a spectral density  \eqref{Phizeta} with
$$
    A = \begin{bmatrix}
        -9 & -4 & -6\\
        6 & 1& 6\\
        4 & 2 &2
        \end{bmatrix},\;
    B = \begin{bmatrix}
            0 \\ 4 \\ -4
        \end{bmatrix},\;
    C = \begin{bmatrix}
            1 & 1 & 0\\
            2& 2& 1\\
            0& 0& 1\\
            3 & 1 & 2
        \end{bmatrix},
    $$
which has rank $m=1$. First take $C_0$ to be the first row of $C$, i.e., $u=\zeta_1$,  $y=(\zeta_2,\zeta_3,\zeta_4)'$, and
$$
        C_0 = \begin{bmatrix}
            1 & 1 & 0
        \end{bmatrix},\quad
        C_1 = \begin{bmatrix}
            2& 2& 1\\
            0& 0& 1\\
            3 & 1 & 2
        \end{bmatrix}.
    $$
Then, $C_0B=4$, $B(C_0B)^{-1}C_0$ has $n=3$ independent eigenvectors, and
    $$
        \Gamma = (I - B(C_0B)^{-1}C_0)A
            = \begin{bmatrix}
            -9 & -4 & -6\\
            9 & 4 & 6\\
            1 & -1 & 2 \end{bmatrix},
    $$
which has rank two with eigenvalue $0$, $-1$ and $-2$.  However, by Theorem~\ref{McMillanthm}, the pole at zero will cancel, and we obtain
 \begin{displaymath}
  F(s) = \frac{1}{(s+2)(s+1)}\begin{bmatrix}
                    s^2-9 \\ -s^2-6s-13 \\ -s^2-5s+4
                    \end{bmatrix},
\end{displaymath}
which is strictly stable of degree two rather than three.

Since the McMillan degree of $F(s)$ is two, it has a minimal realization of dimension two. One such realization is given by
  \begin{displaymath}
        F(s)=\left[\begin{array}{c|c} \Gamma_F &B_F\\\hline C_F& D_F\end{array}\right],
    \end{displaymath}
   where
    $$\Gamma_F = \begin{bmatrix} -2 & 0\\ 0 & -1\end{bmatrix},
        B_F = \begin{bmatrix} 1\\1 \end{bmatrix},
        C_F = \begin{bmatrix} 5 & -8\\ 5 & -8\\ -10 & 8 \end{bmatrix},$$
        $$D_F = \begin{bmatrix} 1 & -1 & -1\end{bmatrix}'.$$

Next, take $C_0$ to be the second row of $C$, i.e., $u=\zeta_2$,  $y=(\zeta_1,\zeta_3,\zeta_4)'$, and
$$
        C_0 = \begin{bmatrix}
            2 & 2 & 1
        \end{bmatrix},
        C_1 = \begin{bmatrix}
            1& 1& 0\\
            0& 0& 1\\
            3 & 1 & 2
        \end{bmatrix}.
    $$
Then, $C_0B=4$ and
    $$
        \Gamma = (I - B(C_0B)^{-1}C_0)A
            = \begin{bmatrix}
            -9 & -4 & -6\\
            8 & 5 & 4\\
            2 & -2 & 4 \end{bmatrix},
    $$
which has rank two with eigenvalue $0$, $-3$ and $3$. In this case, $B(C_0B)^{-1}C_0$ has only two independent eigenvalues, so we cannot apply Theorem~\ref{McMillanthm}. However, due to \eqref{Falt}, the zero pole will nevertheless cancel, and we obtain
 \begin{displaymath}
    F(s) = \frac{1}{(s+3)(s-3)}\begin{bmatrix}
                    s^2+3s+2 \\ -s^2-6s-13 \\ -s^2-5s+4
                    \end{bmatrix},
  \end{displaymath}
  which is unstable. Then to stabilize the closed-loop system a nonzero $H(s)$ is required. As such an $H(s)$ will not be unique, we are only interested in its existence. The system
  $$\Gamma_F = \begin{bmatrix} -3 & 0\\ 0 & 3\end{bmatrix},
        B_F = \begin{bmatrix} \frac{1}{3}\\ \frac{1}{3} \end{bmatrix} = \frac{1}{3}\begin{bmatrix} 1\\ 1 \end{bmatrix},
        C_F = \begin{bmatrix} -1 & 10\\ 2 & -20\\ -5 & -10 \end{bmatrix},$$
        $$D_F = \begin{bmatrix} 1 & -1 & -1\end{bmatrix}'$$
is a minimal realization of $F(s)$.

\subsection{Example 2: A counterexample}

In Example 1 we had a dynamical system with stable transfer function $F(s)$ and another with an unstable one.
Manfred Deistler \cite{Deistler} has recently posed the question whether there is always a selection of inputs, such that corresponding transfer function is stable and causal. The following simple counterexample answers this question in  the negative.

Let $\Phi_\zeta$ be a spectral density  with
$$
  A = \begin{bmatrix}
        -3 & -\frac{4}{3}\\
        \frac{3}{2} & 0
        \end{bmatrix},
    B = \begin{bmatrix}
            -3 \\ -2
        \end{bmatrix},
    C = \begin{bmatrix}
            1 & 0\\
            1& -1
        \end{bmatrix},
    $$
which has rank $m=1$. First take $C_0$ to be the first row of $C$, i.e., $u=\zeta_1$  $y=\zeta_2$, and
$$
        C_0 = \begin{bmatrix}
            1 &  0
        \end{bmatrix},
        C_1 = \begin{bmatrix}
            1& -1
        \end{bmatrix}.
    $$
 Then,
    $$
   \Gamma = (I - B(C_0B)^{-1}C_0)A
            = \begin{bmatrix}
            0 & 0 \\
            \frac{7}{2} & \frac{8}{9} \end{bmatrix},
    $$
which has rank 1 with eigenvalue $0$ and $\frac{8}{9}$; and
\begin{displaymath}
  F(s) = \frac{6s-79}{2(9s-8)},
\end{displaymath}
which is unstable.

Next take $C_0$ to be the second row of $C$, i.e., $u=\zeta_2$  $y=\zeta_1$, and
$$
        C_0 = \begin{bmatrix}
            1 & -1
        \end{bmatrix},
        C_1 = \begin{bmatrix}
            1& 0
        \end{bmatrix}.
    $$
Then,
    $$
   \Gamma = (I - B(C_0B)^{-1}C_0)A
            = \begin{bmatrix}
            \frac{21}{2} & \frac{8}{3} \\
            \frac{21}{2} & \frac{8}{3} \end{bmatrix},
    $$
which has rank 1 with eigenvalue $0$, $\frac{79}{6}$ and
 \begin{displaymath}
    F(s) = \frac{2(9s-8)}{(6s-79)}.
  \end{displaymath}

Consequently, no matter how we choose $C_0$, there is no selection admitting a stable $F(s)$.

\section{A remark on the indirect method of CARMA identification}\label{sec:CARMA}

Continuous-time systems \eqref{eq:continuous} are generally identified from sampled data. Instead of measuring the continuous signal $\zeta$ itself, in practice we observe the sampled process $\zeta_k=\zeta(kh)$ for some sampling period $h$. The corresponding state process $x$ is then sampled as $x_k=x(kh)$ and satisfies the recursion
\[
x_{k+1}=e^{Ah}x_k+\int_{kh}^{(k+1)h}e^{A((k+1)h-\tau)}Bdw(\tau).
\]
which leads to the the discrete-time stochastic system
\begin{subequations}\label{eq:xk}
\begin{align}
x_{k+1}&= A_d x_k +B_d v_k\\
\zeta_k&=C_d x_k
\end{align}
\end{subequations}
where $A_d=e^{Ah}$, $C_d=C$ and the term
\begin{displaymath}
B_d v_k=\int_{kh}^{(k+1)h}e^{A((k+1)h-\tau)}Bdw(\tau)
\end{displaymath}
defines a sequence of independent random vectors having zero mean, which is normalized by the matrix $B_d $ so that $v_k$ becomes normalized white noise. It is then straightforward to see that the covariance matrix $P:=\mathbb{E}\{x(t)x(t)'\}$ satisfies the two Lyapunov equations
\begin{subequations}
\begin{align}
\label{}
    &AP+PA'+BB'=0   \\
    & P=A_dPA_d +B_dB_d
\end{align}
\end{subequations}
simultaneously; see e.g., \cite{GLsampling}.

The main point in \cite{GLsampling} was to demonstrate that a rank deficiency in the continuous-time spectral density $\Phi_\zeta$ is lost in the discrete time observation but can nevertheless be recovered by inverting the sampling process.  Indeed, due to reachability,
\begin{displaymath}
B_dB_d' = \int _0^h e^{A(h-\sigma)}BB'e^{A'(h-\sigma)}d\sigma
\end{displaymath}
is always full rank even if $BB'$ is not. This hides any rank deficiency in the continuous-time spectral density.  We quote the following theorem from  \cite{GLsampling}.

\medskip

\begin{theorem}[Georgiou-Lindquist \cite{GLsampling}]\label{prop:discretemodel}
Consider the continuous-time stochastic model \eqref{eq:continuous} having parameters $(A,B,C)$. Sampling with period $h$ gives rise to the discrete-time stochastic model \eqref{eq:xk} with parameters $(A_d,B_d,C_d)$. Conversely, if the parameters $(A_d,B_d,C_d)$ of the
stochastic model \eqref{eq:xk} (with $A_d$ a stability matrix) are such that
\begin{subequations}\label{conditions}
\begin{align}
\mbox{i) }&A_d \mbox{ admits a (principal) matrix logarithm \cite{culver1966existence}}
\label{eq:logm}\\
& \mbox{that we denote }\log(A),\nonumber\\
\mbox{ii) }&\det (B_dB_d')\neq 0, \label{Bsqinv}\\
\label{eq:key}
\mbox{iii) }&\log(A)P+P\log(A)'\leq 0,\mbox{ for }P\\
&\mbox{ the solution of } P=A_dPA_d'+B_dB_d',\nonumber
\end{align}
\end{subequations}
then \eqref{eq:xk} arises by sampling \eqref{eq:continuous} with $A=\frac{1}{h}\log(A_d)$, $C=C_d$,
and $B$  a left factor of $-(AP+PA')$ of full column rank.
\end{theorem}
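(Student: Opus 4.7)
The proof has two directions. The forward direction, that sampling the continuous-time model \eqref{eq:continuous} yields the discrete-time model \eqref{eq:xk}, is essentially the computation displayed just above the theorem: integrating $dx=Ax\,dt+B\,dw$ over $[kh,(k+1)h]$ gives $x_{k+1}=e^{Ah}x_k+\int_{kh}^{(k+1)h}e^{A((k+1)h-\tau)}B\,dw(\tau)$, and by It\^o isometry the stochastic integral has covariance $\int_0^h e^{A\sigma}BB'e^{A'\sigma}\,d\sigma$; normalizing this to an uncorrelated standardized sequence $v_k$ identifies $B_d$ and yields $A_d=e^{Ah}$, $C_d=C$ as claimed.

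For the converse, I would assume i)--iii) and set $A=h^{-1}\log(A_d)$ and $C=C_d$. First I would verify that $A$ is a continuous-time stability matrix: since $A_d$ has spectrum in the open unit disk and $\log$ is the principal branch, each eigenvalue of $A$ has real part $h^{-1}\log|\mu_i|<0$. Condition iii) then says $AP+PA'\le 0$, so $-(AP+PA')$ is symmetric positive semidefinite and admits a factorization $-(AP+PA')=BB'$ with $B$ of full column rank equal to the rank of $-(AP+PA')$. This is precisely the $B$ that enters the proposed continuous-time model.

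The heart of the argument is to check that sampling the continuous-time system $(A,B,C)$ just constructed reproduces the given $(A_d,B_d,C_d)$. Since $e^{Ah}=A_d$ and $C=C_d$ by construction, what remains is the noise covariance. Define $Q(t)=\int_0^t e^{A\sigma}BB'e^{A'\sigma}\,d\sigma$ and note that $\frac{d}{dt}\bigl(e^{At}Pe^{A't}\bigr)=e^{At}(AP+PA')e^{A't}=-\dot Q(t)$, so $e^{At}Pe^{A't}+Q(t)=P$ for every $t\ge 0$. Evaluating at $t=h$ and combining with the hypothesis $P=A_dPA_d'+B_dB_d'$ yields $Q(h)=B_dB_d'$, which is exactly the required equality between the integrated continuous-time noise covariance over one sampling period and the discrete-time noise covariance.

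The principal obstacle is this telescoping identity $e^{At}Pe^{A't}+Q(t)=P$, which relies critically on the Lyapunov relation $AP+PA'+BB'=0$ built into the construction of $B$; without the semidefiniteness in iii) no real factorization $BB'=-(AP+PA')$ exists and the converse breaks down. A secondary subtlety is the consistency between a possibly rank-deficient continuous-time $B$ and the full-rank discrete-time $B_d$: as noted in the discussion preceding the theorem, $Q(h)$ is automatically full rank by reachability of $(e^{A\sigma}B)$ over $[0,h]$, so condition ii) is consistent with \emph{any} $B$ extracted from the factorization, and the continuous-time rank deficiency of $\Phi_\zeta$ is genuinely hidden in the sampled model until inverted by this construction.
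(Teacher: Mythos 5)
Your argument is correct, and it is worth noting that the paper itself offers no proof of this statement: it is quoted verbatim from \cite{GLsampling}, and the only supporting material in the text is the observation, just before the theorem, that the stationary covariance $P$ satisfies the continuous and discrete Lyapunov equations simultaneously. Your proof supplies exactly the missing link behind that observation: the identity $\frac{d}{dt}\bigl(e^{At}Pe^{A't}\bigr)=-\frac{d}{dt}Q(t)$ with $Q(t)=\int_0^t e^{A\sigma}BB'e^{A'\sigma}\,d\sigma$, which integrates to $e^{At}Pe^{A't}+Q(t)=P$ and, at $t=h$, forces $Q(h)=B_dB_d'$ once $P=A_dPA_d'+B_dB_d'$ is imposed. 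Together with the spectral-mapping remark that the principal logarithm of a Schur-stable $A_d$ is Hurwitz, and the factorization $BB'=-(AP+PA')$ licensed by iii), this is the standard and, as far as one can tell, the intended argument. One small point of logic in your closing paragraph: you do not need to assume reachability of $(A,B)$ to reconcile a rank-deficient $B$ with a full-rank $B_d$; the identity $Q(h)=B_dB_d'$ together with hypothesis ii) already forces $Q(h)$ to be nonsingular, and hence forces $(A,B)$ to be reachable a posteriori. You might also state explicitly that matching the second-order statistics (mean zero and covariance $Q(h)=B_dB_d'$) of the integrated noise is what justifies writing it as $B_dv_k$ with $v_k$ normalized white noise, which is the precise sense in which \eqref{eq:xk} ``arises by sampling.''
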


\medskip

Except for condition \eqref{Bsqinv}, the inverse procedure of Theorem~\ref{prop:discretemodel} has been standard in various forms in classical literature on CARMA identification \cite{Soderstrom91,MahataFu07,BPS13}. This was overlooked in \cite{GLsampling}. In fact, both S\"oderstr\"om \cite{Soderstrom91} and Mahata and Fu \cite{MahataFu07} present a number of different algorithms for identifying a continous-time models \eqref{eq:continuous} from sampled data via the discrete-time model \eqref{eq:xk}.

However, the papers \cite{Soderstrom91,MahataFu07} appear to have missed the point \eqref{Bsqinv}, which turns out to be crucial, and this is our main point here. In fact, the discrete-time sampled model does not inherit the rank deficiency discussed in \cite{GLsampling} and in our present paper. For example, in algorithm E2 in  \cite{Soderstrom91} the matrix $R_d=B_dB_d'$ is assumed to be of rank 1, whereas it must be of full rank if \eqref{eq:xk} is truly a sampled model.

\section{Conclusions}\label{sec:conclusions}

To explain the stability and causality properties of hidden dynamical relations in stationary stochastic networks, we have introduced feedback models and related this to Granger causality. Deterministic dynamical relations occur because of rank deficiencies in continuous-time spectral densities. As established in \cite{GLsampling} these rank deficiencies disappear after sampling. This naturally connects to the indirect method of CARMA identification, where the continuous-time model is identified based on sampled data.

\appendix

\begin{lemma}\label{inverse}
Let
\[
P(s):=\left[\begin{array}{c|c}A&B\\\hline C&D\end{array}\right],
\]
where $D$ is square and nonsingular. Then
\[
P(s)^{-1}:=\left[\begin{array}{c|c}A-BD^{-1}C&BD^{-1}\\\hline -D^{-1}C&D^{-1}\end{array}\right].
\]
\end{lemma}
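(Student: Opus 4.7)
The plan is to use the state-space/dynamical interpretation of the realization, which gives the cleanest derivation. Reading the realization as the input-output system
\[
\dot{x}=Ax+Bu,\qquad y=Cx+Du,
\]
the transfer function from $u$ to $y$ is $P(s)=C(sI-A)^{-1}B+D$. Since $D$ is invertible, I can solve the output equation for $u$ to get $u=D^{-1}y-D^{-1}Cx$. Substituting into the state equation yields the equivalent system
\[
\dot{x}=(A-BD^{-1}C)x+BD^{-1}y,\qquad u=-D^{-1}Cx+D^{-1}y,
\]
whose transfer function from $y$ to $u$ is precisely the matrix claimed for $P(s)^{-1}$. Because the block realization is unambiguously tied to its transfer function, this identifies $P(s)^{-1}$ with the stated realization.

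To make the argument self-contained I would also present a purely algebraic verification. Set $A_c:=A-BD^{-1}C$ and let $Q(s):=-D^{-1}C(sI-A_c)^{-1}BD^{-1}+D^{-1}$ denote the candidate inverse. Multiplying out $P(s)Q(s)$ produces four terms; one equals $I$, and the remaining three combine to
\[
C\bigl[(sI-A)^{-1}-(sI-A_c)^{-1}-(sI-A)^{-1}BD^{-1}C(sI-A_c)^{-1}\bigr]BD^{-1}.
\]
The resolvent identity $(sI-A)^{-1}-(sI-A_c)^{-1}=(sI-A)^{-1}(A-A_c)(sI-A_c)^{-1}$, together with $A-A_c=BD^{-1}C$, makes the bracketed expression vanish identically. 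Thus $P(s)Q(s)=I$, and the computation of $Q(s)P(s)=I$ is entirely analogous.

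There is essentially no conceptual obstacle; the only care needed is to note that these are rational-function identities valid wherever $(sI-A)^{-1}$ and $(sI-A_c)^{-1}$ exist, which is a cofinite set of $s\in\mathbb{C}$, so the equality extends to all $s$ by analyticity. The minor bookkeeping step worth highlighting is checking the sign conventions in the resolvent identity so that the $BD^{-1}C$ terms actually cancel rather than double.
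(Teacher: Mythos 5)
Your first argument is exactly the paper's proof: interpret the realization as the system $\dot{x}=Ax+Bu$, $y=Cx+Du$, solve the output equation for $u$, and read off the inverse system's realization. The supplementary algebraic check via the resolvent identity is correct and a nice self-contained confirmation, but the core approach is the same as the paper's.
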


\begin{proof}
The rational matrix function $P$ is the transfer function of the control system
\begin{displaymath}
\begin{cases}
\dot{x}=Ax+Bu\\
y=Cx+Du
\end{cases}.
\end{displaymath}
Solving for $u$, we have
\begin{displaymath}
u=D^{-1}(y-Cx),
\end{displaymath}
which inserted in the first equation yields
\begin{displaymath}
\dot{x}=(A-BD^{-1}C)x+BD^{-1}y,
\end{displaymath}
and hence we have the inverse system
\begin{displaymath}
\begin{cases}
\dot{x}=(A-BD^{-1}C)x+BD^{-1}y\\
u=-D^{-1}Cx + D^{-1}y
\end{cases}
\end{displaymath}
with transfer function $P(s)^{-1}$.
\end{proof}

\medskip

\begin{lemma}\label{similaritylem}
    If $A\in \mathbb{R}^{m\times n}$, $B\in \mathbb{R}^{n\times m}$, then the nonzero eigenvalues of $AB$ and $BA$ are the same.
\end{lemma}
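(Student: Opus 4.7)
The plan is to prove the set equality of nonzero spectra by a direct eigenvector transport argument, which is both the shortest and the most transparent approach. Given $\lambda\neq 0$ an eigenvalue of $AB$ with eigenvector $v\neq 0$, the natural candidate for a corresponding eigenvector of $BA$ is $w:=Bv$. First I would verify that $w$ is nonzero: from $ABv=\lambda v$ and $\lambda\neq 0$, we have $A w = \lambda v\neq 0$, which forces $w\neq 0$. Then a one-line computation,
\[
BA w = BA(Bv) = B(ABv) = B(\lambda v) = \lambda w,
\]
shows that $\lambda$ is an eigenvalue of $BA$ with eigenvector $w$.

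Swapping the roles of $A$ and $B$ in the argument above yields the reverse inclusion: every nonzero eigenvalue of $BA$ arises as a nonzero eigenvalue of $AB$. Hence the two matrices share exactly the same nonzero spectrum (as a set). This is all that is strictly needed for the application in the proof of Lemma~\ref{Gammarank}, since that proof combines the lemma with an additional rank/diagonalizability hypothesis on $B(C_0 B)^{-1} C_0$ to pin down the multiplicities.

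If one wished to strengthen the conclusion to multiset equality of nonzero eigenvalues (with algebraic multiplicities), the cleanest route would be the block-matrix determinant identity
\[
\lambda^{n}\det(\lambda I_m - AB)\;=\;\lambda^{m}\det(\lambda I_n - BA),
\]
obtained by evaluating the determinant of the $(m+n)\times(m+n)$ block matrix
$\left[\begin{smallmatrix}\lambda I_m & A\\ B & I_n\end{smallmatrix}\right]$ in two different ways using the Schur complement. I do not foresee any genuine obstacle; the only subtlety is the nonvanishing check $Bv\neq 0$, which is handled by exploiting $\lambda\neq 0$. For the purposes of this paper, the short eigenvector-transport proof suffices.
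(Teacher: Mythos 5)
Your eigenvector-transport argument is correct and complete: the key step, checking that $w=Bv\neq 0$ via $Aw=\lambda v\neq 0$, is exactly the point where $\lambda\neq 0$ is needed, and you handle it properly. However, your route is genuinely different from the paper's. The paper proves the lemma by exhibiting an explicit similarity between the block matrices $\left[\begin{smallmatrix}AB & 0\\ B & 0\end{smallmatrix}\right]$ and $\left[\begin{smallmatrix}0 & 0\\ B & BA\end{smallmatrix}\right]$, which yields the characteristic-polynomial identity $\lambda^{n}\det(\lambda I_m-AB)=\lambda^{m}\det(\lambda I_n-BA)$ --- essentially the determinant identity you mention only as an optional strengthening (the paper reaches it by similarity rather than by Schur complements, but that is a cosmetic difference). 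The trade-off is real: the paper's version gives equality of nonzero eigenvalues \emph{with algebraic multiplicities}, and its proof of Lemma~\ref{Gammarank} tacitly leans on this when it concludes that $B(C_0B)^{-1}C_0$ has exactly $m$ eigenvalues equal to $1$; your set-equality version identifies $1$ as the only possible nonzero eigenvalue but needs the supplementary rank-plus-diagonalizability argument (which you correctly flag) to pin down the multiplicity $m$. So your proof is shorter and more elementary, but slightly weaker as stated; if you want it to be a drop-in replacement for the paper's lemma as it is actually used, you should either promote the determinant identity from a remark to the main argument or make the multiplicity bookkeeping in Lemma~\ref{Gammarank} explicit.
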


     \medskip
\begin{proof}
Setting
\begin{displaymath}
T_1:=\begin{bmatrix}AB & 0\\ B & 0\end{bmatrix}\quad\text{and}\quad
T_2:=\begin{bmatrix}0 & 0\\ B & BA\end{bmatrix},
\end{displaymath}
 we observe that
 \begin{displaymath}
S^{-1}T_1S =T_2,
\end{displaymath}
where  $S$ is the $(m+n)\times (m+n)$ matrix
\begin{displaymath}
S:= \begin{bmatrix} I_m & A\\ 0 & I_n \end{bmatrix},
\end{displaymath}
i.e., $T_1$ and $T_2$ are similar. Consequently they have the same characteristic polynomial. That is to say,
\begin{displaymath}
\det(\lambda I_m - AB)\lambda^n=\lambda^m\det(\lambda I_n - BA)
\end{displaymath}
Then, for each nonzero eigenvalue $\tilde\lambda$ of $AB$, we have $\det(\tilde\lambda I_m - AB)=0$, so $\tilde\lambda$ is also an eigenvalue of $BA$. For the same reason each nonzero eigenvalue of $BA$ is an eigenvalue of $AB$.
\end{proof}

\addtolength{\textheight}{-3cm}   

\end{document}